\numberwithin{equation}{section} 
\theoremstyle{plain}
\newtheorem{theorem}[subsection]{Theorem}
\newtheorem{lemma}[subsection]{Lemma}
\newtheorem{definition}[subsection]{Definition}
\newtheorem{remark}[subsection]{Remark}
\newtheorem{proposition}[subsection]{Proposition}
\newcommand\id{\operatorname{id}}
\begin{document}

\title{A Fenchel-Moreau theorem for $\bar L^0$-valued functions}

\author{Samuel Drapeau}
\address{Shanghai Jiao Tong University, Shanghai Advanced Institute of Finance}
\email{sdrapeau@saif.sjtu.edu.cn}

\author{Asgar Jamneshan}
\address{Department of Mathematics and Statistics, University of Konstanz}
\email{asgar.jamneshan@uni-konstanz.de}

\author{Michael Kupper}
\address{Department of Mathematics and Statistics, University of Konstanz}
\email{kupper@uni-konstanz.de}

\thanks{We thank an anonymous referee for helpful comments. S.~D.~is supported by NSF of China grant ''Research Fund for International Young Scientists'' 11550110184. The grant "Assessment of Risk and Uncertainty in Finance" number AF0710020 from Shanghai Jiao Tong University is gratefully acknowledged. A.~J.~and M.~K.~are supported by DFG grant KU-2740/2-1.}

\subjclass[2010]{46A20, 03C90, 46B22}

\begin{abstract}  
We establish a Fenchel-Moreau theorem for proper convex functions $f\colon X\to \bar{L}^0$, where $(X,Y,\langle \cdot,\cdot \rangle)$ is a dual pair of Banach spaces and $\bar L^0$ is the space of all extended real-valued  functions on a $\sigma$-finite measure space. 
We introduce the concept of stable lower semi-continuity which is shown to be equivalent to the existence of a dual representation $f(x)=\sup_{y \in L^0(Y)} \left\{\langle x, y \rangle - f^\ast(y)\right\}$, 
where $L^0(Y)$ is the space of all strongly measurable functions with values in $Y$, and $\langle \cdot,\cdot \rangle$ is understood pointwise almost everywhere. 
The proof is based on a conditional extension result and conditional functional analysis. \\

\smallskip
\noindent \textit{Key words and phrases: Fenchel-Moreau theorem, vector duality, semi-continuous extension, conditional functional analysis} 
\end{abstract}

\maketitle

\setcounter{tocdepth}{1}

\section{Introduction}

This article contributes to vector duality by providing a notion of lower semi-continuity and proving its equivalence to a Fenchel-Moreau type dual representation. 
Let $(\Omega,\mathcal{F},\mu)$ be a $\sigma$-finite measure space, $(X,Y,\langle \cdot, \cdot\rangle)$ a dual pair of Banach spaces and $\bar L^0$ the collection of all measurable functions $x\colon\Omega\to \mathbb{R}\cup\{\pm\infty\}$, where two of them are identified if they agree almost everywhere. 
Consider on $\bar L^0$ the order of almost everywhere dominance. 
Let $f\colon X\to \bar L^0$ be a proper convex function. 
We prove that stable lower semi-continuity (see below) is equivalent to the Fenchel-Moreau type dual representation 
\begin{equation}\label{darstellung}
 f(x)=\sup_{y\in L^0(Y)}\{\langle x,y\rangle - f^\ast(y)\}, \quad x\in X, 
\end{equation}
 where $L^0(Y)$ is the space of all strongly measurable functions $y\colon \Omega\to Y$ modulo almost everywhere equality, $f^\ast(\cdot)=\sup_{x\in X}\{\langle x,\cdot\rangle - f(x)\}$ is the convex conjugate and $\langle x,y\rangle(\omega):=\langle x,y(\omega)\rangle$ almost everywhere. 

The idea is to extend the algebraic and topological structure of $f\colon X\to \bar L^0$ to a larger $L^0$-module context in such a way that a conditional version of the Fenchel-Moreau theorem can be applied. 
More precisely, we first extend the duality pairing $\langle\cdot,\cdot\rangle$ to a conditional duality pairing on $L^0(X)\times L^0(Y)$. 
We consider on $L^0(X)$ the stable weak topology $\sigma_s(L^0(X),L^0(Y))$ which can be viewed as the conditional analogue of the weak topology $\sigma(X,Y)$. 
For a discussion of topologies in conditional settings or $L^0$-modules, we refer to \cite{DJKK13,kupper03,guo10,JZ2017compact}. 
We call a function $f:X\to \bar L^0$ $\sigma_s$-lower semi-continuous if its extension $f_s$ to step functions given by $f_s(\sum_k x_k 1_{A_k}):=\sum_k f(x_k) 1_{A_k}$ is lower semi-continuous w.r.t.~the relative $\sigma_s(L^0(X),L^0(Y))$-topology (notice that the space $L^0_s(X)$ of step functions with values in $X$ is a subset of $L^0(X)$). 
We prove that  $\sigma_s$-lower semi-continuity is sufficient to extend $f\colon X\to \bar L^0$ to a stable proper $L^0$-convex  and $\sigma_s(L^0(X),L^0(Y))$-lower semi-continuous function $F:L^0(X)\to \bar L^0$. 
Building on a conditional version of the Fenchel-Moreau theorem, we find the conditional dual representation 
\begin{equation}\label{darstellung1}
F(x)=\sup_{y\in L^0(Y)}\{\langle x,y\rangle - F^\ast(y)\}, \quad x\in L^0(X),  
\end{equation}
for a conditional convex conjugate $F^\ast\colon L^0(Y)\to \bar L^0$. 
Finally, by restricting \eqref{darstellung1} to $X$, we derive at the representation \eqref{darstellung}. 

In optimization Fenchel-Moreau duality is an important result for strong duality and related regularity conditions, see \cite{grad2017conditional} for vector optimization results based on the Fenchel-Moreau duality in this work. 
Our Fenchel-Moreau theorem cannot be obtained from scalarization techniques \cite{ioan2009duality,bot2011duality}, set-valued methods \cite{hamel09,hamel14,loehne2007duality} or vector-space techniques \cite{zowe75,koshifenchel1983}. 
The module approach in \cite{kutateladze81,kupper03} cannot be applied since a Banach space is a priori not an $L^0$-module. 
A similar approach to ours is taken in \cite{kutateladze06} with the tools of Boolean-valued analysis \cite{kusraev2012boolean}, albeit in the context of norm topologies. 
For further results in vector and conditional duality, we refer to \cite{ioan2009duality,jahn2004vector,heyde2008geometric,OZ2017stabil,zapata2016eberlein}.  

The remainder of this article is organized as follows. 
In Section \ref{s:extension} we introduce the setting and prove the main extension result. 
In Section \ref{s:main} we derive a vector-valued Fenchel-Moreau theorem. 

\section{Extension of  stable lower semi-continuous functions}\label{s:extension}

\subsection{Preliminaries}
Let $L^0$, $L^0_{++}$ and $\bar L^0$ denote the spaces of all measurable functions on a $\sigma$-finite measure space $(\Omega,\mathcal{F},\mu)$ with values in $\mathbb{R}$, $\mathbb{R}_{++}$ and $[-\infty,+\infty]$, where two of
them are identified if they agree almost everywhere (a.e.). In particular, all equalities and inequalities in $\bar L^0$ are understood in the a.e.~sense. Every nonempty subset $C$ of $\bar L^0$ has a least upper bound $\sup C:=\mathop{\rm ess\,sup} C$ and a greatest lower bound  $\inf C:=\mathop{\rm ess\,inf} C$
in $\bar L^0$ with respect to the a.e.~order. 

Throughout all functions on $\Omega$ are assumed to be (strongly) measurable and we identify functions which agree a.e..
Given a set $Z$, we denote by $L^0_s(Z)$ the space of all \emph{step functions} $\sum_{k} z_k 1_{A_k}:\Omega\to Z$, where $(z_k)$ is a sequence in $Z$, $(A_k)$ is a partition  of $\Omega$, and $\sum_{k} z_k 1_{A_k}$ denotes the function which is equal to $z_k$
for almost all $\omega\in A_k$. If $Z$ is partially ordered we consider on $L^0_s(Z)$ the partial order 
$\sum_{k} x_k 1_{A_k}\ge \sum_{l} y_l 1_{B_l}$ whenever $x_k\ge y_l$ for all $k,l$ with $\mu(A_k\cap B_l)>0$. Given a function $f$ from $Z$ to a set $\tilde Z$, its \emph{extension to step functions} $f_s:L^0_s(Z)\to L^0_s(\tilde Z)$ is defined by
 \[
 f\Big(\sum_k z_k 1_{A_k}\Big):=\sum_k f(z_k) 1_{A_k}.
 \]

A set $H$ of functions on $\Omega$ is called \emph{stable} (under countable concatenations) if it is non-empty and 
$\sum_{k} h_k 1_{A_k}\in H$ for every sequence $(h_k)$ in $H$ and every partition $(A_k)$ of $\Omega$. 
A \emph{stable family} $(h_i)_{i\in I}$ in $H$ is a family $(h_i)$ in $H$ indexed by a stable set $I$ of functions on $\Omega$ such that 
\[\sum_k h_{i_k} 1_{A_k}=h_{\sum_k i_k 1_{A_k}}\] 
for every sequence $(i_k)$ in $I$ and every partition $(A_k)$ of $\Omega$. A \emph{stable net} $(h_\alpha)$ in $H$ is a stable family indexed by a stable set of functions with values in a directed set. 
If the directed set is $\mathbb{N}$ the stable net $(h_n)$ is a \emph{stable sequence} in which case the index set equals $L^0_s(\mathbb{N})$. 
A stable family $(h_m)$ is a \emph{stable finite family} if it is indexed by a stable set of the form $\{m\in L^0_s(\mathbb{N}):1\le m\le n\}$ for some $n\in L^0_s(\mathbb{N})$.
Let $I$ and  $(H_i)$, $i\in I$, be stable sets of functions on $\Omega$. Then  $(H_i)_{i\in I}$ is called a \emph{stable family of stable sets} if 
\[\sum_k H_{i_k} 1_{A_k}:=\Big\{\sum_k h_{i_k} 1_{A_k}:h_{i_k}\in H_{i_k}\Big\}=H_{\sum_k i_k 1_{A_k}}\] 
for every sequence $(i_k)$ in $I$ and every partition $(A_k)$ of $\Omega$.
\begin{remark}\label{l:choice}
Given a stable family of stable sets $(H_i)_{i\in I}$ there exists a stable family $(h_i)_{i\in I}$ such that $h_i\in H_i$ for all $i\in I$. This follows by the same arguments as in \cite[Theorem 2.26]{DJKK13}, where the statement is shown within conditional set theory.
\end{remark}

\subsection{Stable lower semi-continuity} 
Let $(X,Y,\langle\cdot,\cdot \rangle)$ be a dual pair of Banach spaces such that
\begin{itemize}
\item[(i)] $|\langle x,y\rangle|\leq \|x\| \|y\|$ for all $x\in X$ and $y\in Y$, and 
\item[(ii)] both norm-closed unit balls are weakly closed.
\end{itemize} 

\begin{examples}
    \begin{enumerate}[a)]
        \item Let $X$ be a Banach space, $Y$ its topological dual space endowed with the operator norm and $\langle x, y\rangle:=y(x)$. 
        \item Let $X=L^p$ and $Y=L^q$ on a finite measure space $(S,\mathcal{S},\nu)$ with $1\leq p,q\leq \infty$ and $1/p+1/q\leq 1$ and $\langle f,g\rangle:=\int_S f g d\nu$. 
        \item Let $X=B_b(S)$ be the Banach space of bounded measurable functions on a measurable space $(S,\mathcal{S})$ endowed with the supremum norm, $Y=\mathcal{M}(S)$ the Banach space of finite signed measures endowed with the total variation norm and $ \langle f, \mu \rangle:=\int_S f d\nu$. 
        \item Let $X=C_b(S)$ be the Banach space of bounded continuous functions on a completely regular Hausdorff space endowed with the supremum norm, $Y=\mathcal{M}_r(S)$ the Banach space of finite signed inner regular measures on the Borel $\sigma$-algebra of $S$ endowed with the total variation norm and $ \langle f, \mu \rangle:=\int_S f d\nu$. 
\end{enumerate}
\end{examples} 

We denote by $L^0(X)$ and $L^0(Y)$ the spaces of all strongly measurable functions on $\Omega$ with values in $X$ and $Y$. 
We understand $X$ as a subset of $L^0_s(X)\subset L^0(X)$ via the embedding $x\mapsto x1_\Omega$. 
Recall that the norm of $X$ extends to $L^0(X)$ by $\|x\|:=\lim_{n\to \infty} \|x_n\|\in L^0$, where $(x_n)$ is a sequence 
in $L^0_s(X)$ such that $x_n\to x$ a.e.. In particular, for every $x\in L^0(X)$ and each $r\in L^0_{++}$ there exists $\tilde{x}\in L^0_s(X)$ such that $\|x-\tilde{x}\|\leq r$. 
Similarly, the duality pairing $\langle\cdot,\cdot \rangle$ can be extended from 
$X\times Y$ to $L^0(X)\times L^0(Y)$ by setting $\langle x, y\rangle:=\lim_{n\to \infty} \langle x_n,y_n\rangle\in L^0$,
where $(x_n)$ is a sequence in $L^0_s(X)$ such that $x_n\to x$ a.e.~and  $(y_n)$ is a sequence 
in $L^0_s(Y)$ such that $y_n\to y$ a.e.. 
Observe that 
\begin{equation}\label{eq:cauchyschwartz}
|\langle x,y\rangle|\leq \|x\|\|y\|
\end{equation}
for all $x\in L^0(X)$ and $y\in L^0(Y)$. 

Next, we endow $L^0(X)$ with a topological structure given by the following neighborhood base. 
For every $x\in L^0(X)$, define
\begin{equation*}
\mathcal{V}(x):=\left\{V^{r}_{(y_m)_{1\leq m\leq n}}(x) \colon r \in L^0_{++},\, (y_m)_{1\leq m\leq n} \text{ stable finite family in } L^0(Y)\right\}, 
\end{equation*}
where
\begin{equation*}
V^{r}_{(y_m)_{1\leq m \leq n}}(x):=\{\tilde{x} \in L^0(X) \colon |\langle \tilde{x} - x, y_m\rangle | \leq r \text{ for all } 1\leq m\leq n\}. 
\end{equation*}
We notice that $\mathcal{V}(x)$ is a stable family of stable sets. Indeed, for each $r \in L^0_{++}$ and 
every stable finite family $(y_m)_{1\leq m\leq n}$ in $L^0(Y)$ the set $V^{r}_{(y_m)_{1\leq m\leq n}}(x)$ in $L^0(X)$ is stable.   For every $n=\sum_j n_j 1_{A_j}\in L^0_s(\mathbb{N})$, $r\in L^0_{++}$ and $y_1,y_2, \dots \in L^0(Y)$ the stable set $V^{r}_{(y_m)_{1\leq m \leq n}}(x)$ is determined through the element $\sum_j (r,n_j,y_1,y_2,\dots,y_{n_j},0,0,\dots)1_{A_j}$ in the space of all strongly measurable functions on $\Omega$ with values
in the Banach space of sequences $(r,n,y_1,y_2,\dots)\in\mathbb{R}\times\mathbb{R}\times l^\infty(Y)$ with finite norm $\|(r,n,y_1,y_2,\dots)\|:=|r|+|n|+\sup_j \|y_j\|$.
Denoting by $I$ the stable set of all $\sum_j (r,n_j,y_1,y_2,\dots,y_{n_j},0,0,\dots)1_{A_j}$
for $n=\sum_j n_j 1_{A_j}\in L^0_s(\mathbb{N})$, $r\in L^0_{++}$ and $y_1,y_2,\dots\in L^0(Y)$,
one has $\mathcal{V}(x)=(V_i(x))_{i\in I}$ where $V_i(x)=V^{r}_{(y_m)_{1\leq m \leq n}}(x)$
for $i=\sum_j (r,n_j,y_1,y_2,\dots,y_{n_j},0,0,\dots)1_{A_j}$. This shows that $\mathcal{V}(x)=(V_i(x))_{i\in I}$
is a stable family of stable sets. In the following we frequently view $\mathcal{V}(x)$ as a set of functions on $\Omega$ by identifying it with the stable set $I$.

The topology induced by the neighborhood base $\mathcal{V}(x)$ is referred to as a \emph{stable topology} and is denoted by $\sigma_s(L^0(X),L^0(Y))$ or simply by $\sigma_s$.
Stable topologies are introduced in \cite{DJKK13} within conditional set theory, we refer to \cite{JZ2017compact} for their connection to $(\epsilon,\lambda)$-topologies and $L^0$-topologies. In this topology a stable net $(x_{\alpha})$ in $L^0(X)$ converges to $x$ if and only if $|\langle x_\alpha - x, y\rangle|\to 0$ a.e.~for all $y\in L^0(Y)$. Moreover, a stable subset $C$ of $L^0(X)$ is closed if and only if $x\in C$ for every stable net $(x_\alpha)$ in $C$ which converges to $x$. 

\begin{definition}\label{d:lsc}
 A function $f\colon X\to \bar L^0$ is called \emph{$\sigma_s$-lower semi-continuous}, if $f(x)\leq \liminf_\alpha f_s(x_{\alpha})$ for every stable net $(x_{\alpha})$ in $L^0_s(X)$ which converges  to $x\in X$. 

 The function $f$ is said to be proper convex, if $f(x)>-\infty$ for all $x\in X$ and $f(x^\prime)\in L^0$ for some $x^\prime\in X$,
 as well as $f(\lambda x + (1-\lambda)\tilde{x})\leq \lambda f(x)+(1-\lambda)f(\tilde{x})$ for every $\lambda\in \mathbb{R}$ with $0\leq \lambda \leq 1$ and all $x,\tilde{x}\in X$. 
\end{definition}

\begin{proposition}\label{p:lsc}
For a function  $f\colon X \to \bar{L}^0$ the following properties are equivalent.

\begin{itemize}
 \item[(i)] $f$ is $\sigma_s$-lower semi-continuous.
 \item[(ii)] $f(x)=\sup_{V\in \mathcal{V}(x)}\inf \{f_s(\tilde{x})\colon \tilde{x}\in V\cap L^0_s(X)\}$ for all $x\in X$.
\item[(iii)] The sublevel set $\{x \in L^0_s(X) \colon f_s(x)\leq a\}$ is closed for each $a\in \bar{L}^0$. 
\end{itemize}
\end{proposition}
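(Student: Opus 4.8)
The plan is to prove the chain of implications $(i)\Rightarrow(ii)\Rightarrow(iii)\Rightarrow(i)$, exploiting the fact that $\sigma_s$-convergence is characterized by stable nets and that the neighborhood filter $\mathcal{V}(x)$ consists of stable families of stable sets.

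\emph{From (i) to (ii).} First I would observe that the inequality $f(x)\geq \sup_{V\in\mathcal{V}(x)}\inf\{f_s(\tilde x):\tilde x\in V\cap L^0_s(X)\}$ is trivial, since $x=x1_\Omega\in V\cap L^0_s(X)$ for every $V\in\mathcal{V}(x)$, so each inner infimum is $\leq f(x)$. For the reverse inequality, set $g(x):=\sup_{V\in\mathcal{V}(x)}\inf\{f_s(\tilde x):\tilde x\in V\cap L^0_s(X)\}$ and suppose toward a contradiction that $g(x)<f(x)$ on a set $B$ of positive measure. The idea is to build a stable net converging to $x$ along which $f_s$ stays below $f(x)$ on $B$. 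For each $V\in\mathcal{V}(x)$ one can, using Remark \ref{l:choice} and the fact that $\mathcal{V}(x)$ is a stable family of stable sets, choose in a stable way an element $x_V\in V\cap L^0_s(X)$ with $f_s(x_V)\leq \inf\{f_s(\tilde x):\tilde x\in V\cap L^0_s(X)\}+\varepsilon$ for a suitable $\varepsilon\in L^0_{++}$ (or, more carefully, a stable selection realizing the infimum up to $1_B$-localized error); directing $\mathcal{V}(x)$ by reverse inclusion gives a stable net $(x_V)$ with $x_V\to x$ in $\sigma_s$, and $\liminf_V f_s(x_V)\leq g(x)<f(x)$ on $B$, contradicting $\sigma_s$-lower semi-continuity. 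The delicate point here is organizing the selection so that the resulting family is genuinely \emph{stable} and indexed by a directed set in the sense required by the definition of a stable net; this is where Remark \ref{l:choice} does the real work.

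\emph{From (ii) to (iii).} Fix $a\in\bar L^0$ and let $(x_\alpha)$ be a stable net in $S_a:=\{x\in L^0_s(X):f_s(x)\leq a\}$ converging to some $x\in L^0_s(X)$; I must show $f_s(x)\leq a$. Writing $x=\sum_k x_k1_{A_k}$, by stability it suffices to argue on each piece and by a concatenation argument reduce to the case $x\in X$. Then for any $V\in\mathcal{V}(x)$, eventually $x_\alpha\in V\cap L^0_s(X)$, hence $\inf\{f_s(\tilde x):\tilde x\in V\cap L^0_s(X)\}\leq \liminf_\alpha f_s(x_\alpha)\leq a$ (using that $f_s$ restricted to the tail of the net is bounded by $a$); taking the supremum over $V$ and invoking (ii) yields $f(x)\leq a$, i.e.\ $f_s(x)\leq a$ on each piece, so $S_a$ is closed. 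One should double-check that the reduction to $x\in X$ respects stability of the net and that the "eventually in $V$" statement is the correct reading of $\sigma_s$-convergence of a stable net as recalled before Definition \ref{d:lsc}.

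\emph{From (iii) to (i).} Let $(x_\alpha)$ be a stable net in $L^0_s(X)$ with $x_\alpha\to x\in X$, and put $a:=\liminf_\alpha f_s(x_\alpha)\in\bar L^0$. The standard trick: for $\varepsilon\in L^0_{++}$ consider the stable cofinal subnet along which $f_s(x_\alpha)\leq a+\varepsilon$; this subnet lies in the closed set $S_{a+\varepsilon}$ and still converges to $x$, so $x\in S_{a+\varepsilon}$, giving $f(x)\leq a+\varepsilon$; letting $\varepsilon\downarrow 0$ along a sequence in $L^0_{++}$ gives $f(x)\leq a=\liminf_\alpha f_s(x_\alpha)$. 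The one subtlety is extracting the stable cofinal subnet on which the liminf inequality holds; since $\liminf$ is an essential infimum of a net of essential suprema, one localizes on countably many measurable sets and reassembles by stability. The main obstacle throughout is therefore not the topological bookkeeping but ensuring that every selection, subnet extraction, and gluing step stays inside the category of \emph{stable} families and stable nets, for which Remark \ref{l:choice} and the stability of $\mathcal{V}(x)$ are the essential inputs.
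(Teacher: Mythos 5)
Your proof follows the same route as the paper: the cycle $(i)\Rightarrow(ii)\Rightarrow(iii)\Rightarrow(i)$, with the approximate stable selection from Remark~\ref{l:choice} over $\mathcal{V}(x)$ directed by reverse inclusion for the first implication, the reduction to $x\in X$ by stability for the second, and a stable subnet extraction into a closed sublevel set for the third. The only bookkeeping you leave implicit is that the ``$+\varepsilon$'' steps must be read after composing $f_s$ with $\arctan$ (as the paper does), since the relevant infima and $\liminf$'s may take the values $\pm\infty$.
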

\begin{proof}
By stability the properties $(i)$ and $(ii)$ can equivalently be formulated for all $x\in L^0_s(X)$.

$(i)\Rightarrow(ii)$: Fix $x\in X$. Obviously, one has
\[d:=\sup_{V\in \mathcal{V}(x)}\inf \{f_s(\tilde{x})\colon \tilde{x}\in V\cap L^0_s(X)\}\le f(x).\]
On the other hand, fix $\varepsilon\in L^0_{++}$ and consider the stable family of stable sets
\[
\Big\{x\in  V\cap L^0_s(X) : \inf_{\tilde{x} \in V\cap L^0_s(X)} \arctan( f_s(\tilde{x}))+\varepsilon \ge \arctan(f_s(x)) \Big\}_{V\in \mathcal{V}(x)}.
\]
By Remark \ref{l:choice}, there exists a stable family $(x_V)_{V\in\mathcal{V}(x)}$ such that $x_V\in V\cap L^0_s(X)$ and
\[\inf_{\tilde{x} \in V\cap L^0_s(X)} \arctan( f_s(\tilde{x}))+\varepsilon \ge \arctan(f_s(x_V))\]
for all $V\in \mathcal{V}(x)$. Since the stable index set $\mathcal{V}(x)$ is ordered by reverse inclusion,
the family $(x_V)_{V\in\mathcal{V}(x)}$ is a stable net which by construction converges to $x$.
By $\sigma_s$-lower semi-continuity of $f$ it follows that 
\[\arctan(d)\ge\liminf_{V\in \mathcal{V}(x)} \arctan(f_s(x_{V}))-\varepsilon \geq \arctan(f(x))-\varepsilon.\]

$(ii)\Rightarrow(iii)$: Let $a\in \bar L^0$ and $(x_\alpha)$ be a stable net with $f_s(x_\alpha)\leq a$ which converges to $x\in L^0_s(x)$. Then one has
\[
f_s(x)=\sup_{V\in \mathcal{V}(x)}\inf \{f_s(\tilde{x})\colon \tilde{x}\in V\cap L^0_s(X)\}\leq \liminf_{\alpha} f_s(x_\alpha)\leq a. 
\]

$(iii)\Rightarrow(i)$: Let $(x_{\alpha})$ be a stable net in $L^0_s(X)$ which converges to $x\in X$.
By Remark \ref{l:choice}, there exists for every $\varepsilon\in L^0_{++}$ a stable subnet $(x_\beta)$ of $(x_\alpha)$ such that 
\[\arctan(f_s(x_\beta))\le  \liminf_{\alpha} \arctan(f_s(x_\alpha))+\varepsilon\] for all $\beta$. Since $x_\beta\to x$ it follows that $\arctan(f(x))\le \liminf_{\alpha} \arctan(f_s(x_\alpha))+\varepsilon$, showing that $f$ is $\sigma_s$-lower semi-continuous.
\end{proof}

\begin{remark}
Let $X=Y=L^2$ on $(0,1]$ endowed with the Lebesgue measure on its Borel $\sigma$-algebra, and consider the identity map $\id:L^2\to L^2$.
Although the sublevel set $\{x\in L^2: \id(x)\le a\}$ is $\sigma(L^2,L^2)$-closed for each $a\in \bar L^0$, the identity map $\id$ is not $\sigma_s$-lower semi-continuous.

Indeed, fix $V^{r}_{(y_m)_{1\leq m \leq n}}(0)\in\mathcal{V}(0)$, and notice that
\[\sup_{1\le m\le n}|\langle x,y_m\rangle|\leq \| x\|\sup_{1\le m\le n}\|y_m\|\quad\mbox{for all } x\in L^0_s(L^2)\] by \eqref{eq:cauchyschwartz}. We can assume that $r/\sup_{1\le m\le n}\|y_m\|\ge \tilde{r}$ for a constant $\tilde{r}>0$, otherwise we partition $(0,1]=\bigcup_k A_k$
and carry out the following argument on each $A_k$. Then, there exists a sequence $(c_l)$ in $\mathbb{R}$ which converges to $+\infty$ such that
\[ x_l=\sum_{k=1}^l x^k_l 1_{(\frac{k-1}{l},\frac{k}{l}]}\in V^{r}_{(y_m)_{1\leq m \leq n}}(0)\cap L^0_s(L^2)\quad\mbox{for all }l\in\mathbb{N},\]
where $x_l^k=-c_l 1_{(\frac{k-1}{l},\frac{k}{l}]}\in L^2$ for all $1\le k\le l$. On the other hand, since $\id_s(x_l)=-c_l$ for all $l\in\mathbb{N}$, it follows that \[\id(0)=0 > -\infty=\sup_{V\in \mathcal{V}(0)}\inf \big\{\id_s(\tilde{x})\colon \tilde{x}\in V\cap L^0_s(L^2)\big\}.\] 
In particular, the identity $\id$ does not have an extension in the sense of Theorem \ref{t:lscextension} below. 
\end{remark}

\subsection{Extension result} Our goal is to extend a $\sigma_s$-lower semi-continuous function $f:X\to \bar L^0$ to a stable function
$F:L^0(X)\to \bar L^0$. We need the following definitions. 

\begin{definition}\label{d:lsc}
A function $F:L^0(X)\to \bar{L}^0$ is called 
\begin{itemize}
\item[(i)] \emph{stable}, if $F(\sum_k x_k 1_{A_k})=\sum_k F(x_k) 1_{A_k}$ for every sequence $(x_k)$ in $L^0(X)$ and each partition $(A_k)$ of $\Omega$,
\item[(ii)] \emph{$\sigma_s$-lower semi-continuous}, if $F(x)\leq \liminf_\alpha F(x_{\alpha})$ for every stable net $(x_{\alpha})$ in $L^0(X)$ converging to $x\in L^0(X)$,
\item[(iii)] \emph{$L^0$-linear}, if $F$ is $L^0$-valued and $F(\lambda x+ \tilde{x})=\lambda F(x)+ F(\tilde{x})$ for all $x,\tilde{x}\in L^0(X)$ and $\lambda\in L^0$,
\item[(iv)] \emph{$L^0$-proper convex}, if $F(x)>-\infty$ for all $x\in L^0(X)$ and $F(x^\prime)\in L^0$ for some $x^\prime\in L^0(X)$, as well as 
$F(\lambda x + (1-\lambda)\tilde{x})\leq \lambda F(x)+(1-\lambda)F(\tilde{x})$ for every $\lambda\in L^0$ with $0\leq \lambda \leq 1$ and all $x,\tilde{x}\in L^0(X)$.
\end{itemize}
\end{definition}

Next, we state the main extension result. 

\begin{theorem}\label{t:lscextension}
For every  $\sigma_s$-lower semi-continuous function  $f\colon X \to \bar{L}^0$
there exists a stable, $\sigma_s$-lower semi-continuous function $F\colon L^0(X) \to \bar{L}^0$ which satisfies $F|_X=f$. 
    
Moreover, if $f$ is proper convex, then this extension can be chosen $L^0$-proper convex.
\end{theorem}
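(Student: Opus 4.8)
The plan is to take for $F$ the lower semi-continuous envelope of the step-function extension $f_s$, namely
\[
F(x):=\sup_{V\in\mathcal V(x)}\inf\bigl\{f_s(\tilde x)\colon \tilde x\in V\cap L^0_s(X)\bigr\},\qquad x\in L^0(X).
\]
First I would note that $F$ coincides with $f_s$ on $L^0_s(X)$, hence with $f$ on $X$: the inequality ``$\le$'' holds because $x\in V$ for every $V\in\mathcal V(x)$, and ``$\ge$'' is exactly Proposition \ref{p:lsc}(ii) read for $x\in L^0_s(X)$, which is admissible by the stability remark at the start of its proof. Stability of $F$ I would deduce from the identity $V^{r}_{(y_m)}\bigl(\sum_k x_k1_{A_k}\bigr)=\sum_k V^{r}_{(y_m)}(x_k)1_{A_k}$, valid because the inequalities $|\langle z-x,y_m\rangle|\le r$ (in the variable $z$) defining these sets decouple along a partition; together with stability of $f_s$ and the fact that $\inf$ and $\sup$ in $\bar L^0$ localise on measurable sets (passing, via Remark \ref{l:choice}, between an ess-infimum of a stable set and a decreasing stable sequence drawn from it), this gives $F(\sum_k x_k1_{A_k})=\sum_k F(x_k)1_{A_k}$.

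The core step is $\sigma_s$-lower semi-continuity. Let $(x_\alpha)$ be a stable net in $L^0(X)$ converging to $x$, and fix a basic neighbourhood $V^{r}_{(y_m)}(x)$. Since $(x_\alpha)$ is eventually in $V^{r/2}_{(y_m)}(x)$, for such $\alpha$ the triangle inequality for $|\langle\,\cdot\,,y_m\rangle|$ yields $V^{r/2}_{(y_m)}(x_\alpha)\subseteq V^{r}_{(y_m)}(x)$, and therefore $F(x_\alpha)\ge\inf\{f_s(z)\colon z\in V^{r}_{(y_m)}(x)\cap L^0_s(X)\}$. Passing to $\liminf_\alpha$, and then taking the supremum over all $r\in L^0_{++}$ and all stable finite families $(y_m)$, reproduces the defining supremum of $F(x)$, so $F(x)\le\liminf_\alpha F(x_\alpha)$.

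Now suppose $f$ is proper convex. Then $f_s$ is convex along \emph{step} weights: for $x,\tilde x\in L^0_s(X)$ and $\lambda\in L^0_s([0,1])$ the combination $\lambda x+(1-\lambda)\tilde x$ is again a step function and $f_s$ evaluates piecewise, so convexity of $f$ gives $f_s(\lambda x+(1-\lambda)\tilde x)\le\lambda f_s(x)+(1-\lambda)f_s(\tilde x)$. From this, the envelope formula and \eqref{eq:cauchyschwartz} I would establish directly the inequality $F(\lambda x+(1-\lambda)\tilde x)\le\lambda F(x)+(1-\lambda)F(\tilde x)$ for $\lambda\in L^0$ with $0\le\lambda\le1$: given a basic neighbourhood of $z:=\lambda x+(1-\lambda)\tilde x$, choose step functions $x',\tilde x'$ in small neighbourhoods of $x,\tilde x$ with $f_s(x'),f_s(\tilde x')$ nearly minimal there, and a step weight $\lambda'$ close to $\lambda$ in the $L^0$-sense (using $\max_m|\langle x-\tilde x,y_m\rangle|\in L^0$), so that $w:=\lambda'x'+(1-\lambda')\tilde x'$ is a step function in the prescribed neighbourhood with $f_s(w)$ controlled by $\lambda F(x)+(1-\lambda)F(\tilde x)$ up to an arbitrarily small error. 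Applied with $\tilde x$ a point where $F$ is finite, the same argument also covers the case $F(x)=-\infty$. Consequently $F$ is finite at any $x'\in X$ with $f(x')\in L^0$ (such $x'$ exists since $f$ is proper), and if $F(\xi_0)=-\infty$ on a set $B$ of positive measure for some $\xi_0\in L^0(X)$, then $F\bigl((1-1/k)\xi_1+(1/k)\xi_0\bigr)=-\infty$ on $B$ for that $\xi_1$ and every $k\in\mathbb N$; since this sequence, viewed as a stable sequence, converges to $\xi_1$, lower semi-continuity forces $F(\xi_1)=-\infty$ on $B$, contradicting $F(\xi_1)\in L^0$. Hence $F>-\infty$ everywhere, and the inequality above is then the required $L^0$-convexity, so $F$ is $L^0$-proper convex.

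I expect the main obstacle to be the bookkeeping around the value $-\infty$ in the last part: one must organise the estimates so that the convexity inequality is available in exactly the form needed to exclude $F=-\infty$ before properness of $F$ has been established. A second, more mechanical difficulty is carrying the stability and lower-semicontinuity verifications faithfully through the stable/conditional formalism — ess-infima of stable sets, stable nets and their subnets, and the selection supplied by Remark \ref{l:choice}.
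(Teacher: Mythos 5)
Your proposal is correct and follows essentially the same route as the paper: the same envelope definition of $F$, the same appeal to Proposition \ref{p:lsc}(ii) to get $F|_X=f$, the same neighbourhood-shrinking argument for lower semi-continuity, and the same combination of convexity and lower semi-continuity along $\lambda\to1$ to rule out the value $-\infty$. The only deviations are cosmetic: you verify lower semi-continuity directly on stable nets rather than first establishing $F=\sup_{V\in\mathcal V(x)}\inf\{F(z):z\in V\}$, and you fold the passage from step weights to general $\lambda\in L^0$ into a single approximation where the paper argues in two steps (exact convexity for step $\lambda$, then lower semi-continuity).
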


\begin{proof}
 Define
    \begin{equation*}
        F(x):=\sup_{V\in \mathcal{V}(x)}\inf\{f_s(\tilde{x})\colon \tilde{x} \in V\cap L^0_s(X)\},\quad x\in L^0(X).
    \end{equation*}
  Notice that for every $x\in L^0(X)$ and $V^{r}_{(y_m)_{1\leq m \leq n}}(x)\in\mathcal{V}(x)$
it follows from \eqref{eq:cauchyschwartz} that
\[\sup_{1\le m\le n}|\langle x_k-x,y_m\rangle|\leq \|x_k-x\|\sup_{1\le m\le n}\|y_m\|\to 0\]
for every sequence $(x_k)$ in $L^0_s(X)$ such that $x_k\to x$ a.e., which shows that 
\[V^{r}_{(y_m)_{1\leq m \leq n}}(x)\cap L^0_s(X)\neq\emptyset.\] 
Hence, $F$ is a well-defined stable function since $f_s$ is a stable function on the stable set $V\cap L^0_s(X)$. Moreover, it follows from Proposition \ref{p:lsc} that $F$ is an extension of $f$. 
That $F$ satisfies the desired properties is shown in the following two steps.

    {\it Step 1.} We show that $F$ is $\sigma_s$-lower semi-continuous. 
            Fix $x\in L^0(X)$ and $\varepsilon\in L^0_{++}$. 
            There exists $V=V^{r}_{(y_m)_{1\leq m\leq n}}(x)$ in $\mathcal{V}(x)$ such that
            \begin{equation*}
                \arctan(F(x)) -\varepsilon\leq \inf\{\arctan(f_s(\tilde{x}))\colon \tilde{x} \in V\cap L^0_s(X)\}.
            \end{equation*}
            Fix $z\in V^{r/2}_{(y_m)_{1\le m\le n}}(x)$. For $\tilde{V}=V^{r/2}_{(y_m)_{1\leq m\leq n}}(z)\in \mathcal{V}(z)$
            it follows from the triangle inequality that $\tilde{V}\subseteq V$.
            Since $\tilde{V}\cap L^0_s(X)\subseteq V\cap L^0_s(X)$, we obtain
            \begin{align*}
                \arctan(F(x)) -\varepsilon \leq \inf\{\arctan(f_s(\tilde{z}))\colon \tilde{z} \in \tilde{V}\cap L^0_s(X)\} 
            \end{align*}
            so that
            \begin{align*}
                \arctan(F(x)) -\varepsilon\leq\arctan(F(z)). 
            \end{align*}
            This shows that for every $\varepsilon\in L^0_{++}$ there exists $V^\varepsilon\in \mathcal{V}(x)$ such that $\arctan(F(x)) -\varepsilon \leq \arctan(F(z))$ for all $z\in V^\varepsilon$. Hence 
            \begin{equation*}
                \arctan(F(x)) - \varepsilon\leq  \sup_{V\in \mathcal{V}(x)}\inf\{\arctan(F(z))\colon z \in V\}.
            \end{equation*}
            By letting $\varepsilon\downarrow 0$, and since 
            $\sup_{V\in \mathcal{V}(x)}\inf\{\arctan(F(z))\colon z\in V\}\leq \arctan(F(x))$ is trivially satisfied, it follows from the strict monotonicity of $\arctan$ that 
            \begin{equation*}
                F(x) = \sup_{V\in \mathcal{V}(x)}\inf\{F(z)\colon z \in V\}.
            \end{equation*}
In particular, $F(x)\leq \liminf_\alpha F(x_{\alpha})$ for every stable net $(x_{\alpha})$ in $L^0(X)$ which converges to $x\in L^0(X)$.

       {\it Step 2.} We show that $F$ is $L^0$-proper convex when $f$ is proper convex.
      Since $F$ is an extension of $f$ there exists $x^\prime\in X\subset L^0(X)$ such that $F(x^\prime)=f(x^\prime)\in L^0$.

      Note that for every $\lambda\in L^0_s(\mathbb{R})$ with $0\leq \lambda \leq 1$ it follows from convexity
            \begin{equation*}
                f_s(\lambda x + (1-\lambda) z)\leq \lambda f_s(x) + (1-\lambda) f_s(z),\quad \mbox{for all }x,z\in L^0_s(X).
\end{equation*}
Fix $x, z\in L^0(X)$ and $\lambda \in L^0_s(\mathbb{R})$ with $0\leq \lambda \leq 1$. Let
       \[V=V^{r}_{(y_m)_{1\leq m \leq n}}(\lambda x+(1-\lambda)z)\in \mathcal{V}(\lambda x+(1-\lambda)z),\]
       \[W=V^{r}_{(y_m)_{1\leq m \leq n}}(x)\in \mathcal{V}(x),\quad\mbox{and}\quad W^\prime=V^{r}_{(y_m)_{1\leq m \leq n}}(z)\in \mathcal{V}(z).\] 
For $\tilde{x}\in W\cap L^0_s(X)$ and $\tilde{z}\in W^\prime\cap L^0_s(X)$ it follows that
$\lambda \tilde{x} + (1-\lambda)\tilde{z}$ is in $V\cap L^0_s(X)$, which shows that
            \begin{multline*}
                \inf\{f_s(\tilde{x})\colon \tilde{x}\in V\cap L^0_s(X)\}
                \leq \inf\{f_s(\lambda \tilde{x} + (1-\lambda)\tilde{z})\colon \tilde{x} \in W\cap L^0_s(X), \, \tilde{z} \in W^\prime\cap L^0_s(X)\}\\
                \leq \inf\{\lambda f_s(\tilde{x})+(1-\lambda)f_s(\tilde{z})\colon \tilde{x} \in W\cap L^0_s(X),\, \tilde{z}\in W^\prime\cap L^0_s(X)\}\\
                =\lambda \inf\{ f_s(\tilde{x})\colon \tilde{x}\in W\cap L^0_s(X)\} + (1-\lambda) \inf\{f_s(\tilde{z})\colon \tilde{z}\in W^\prime\cap L^0_s(X)\},
            \end{multline*}
            where we employ the convention $-\infty+\infty=+\infty$. 
            Hence, for every $V\in \mathcal{V}(\lambda x+(1-\lambda)z)$ there exist $W\in\mathcal{V}(x)$ and $W^\prime\in \mathcal{V}(z)$ such that 
            \begin{multline*}
                \inf\{f_s(\tilde{x})\colon \tilde{x} \in V\cap L^0_s(X)\}\\ \leq \lambda \inf\{f_s(\tilde{x})\colon \tilde{x}\in W\cap L^0_s(X)\} + (1-\lambda) \inf\{f_s(\tilde{z})\colon \tilde{z} \in W^\prime\cap L^0_s(X)\}. 
            \end{multline*}
            By taking the supremum on both sides of the previous inequality, one obtains 
            \begin{equation*}
                F(\lambda x + (1-\lambda)z)\leq \lambda F(x) + (1-\lambda)F(z).  
            \end{equation*}
            The last inequality also holds for $\lambda\in L^0$ with $0\leq \lambda \leq 1$ by approximating $\lambda$ with step functions in $L^0_s(\mathbb{R})$ and using the $\sigma_s$-lower semi-continuity of $F$. 

           By way of contradiction, suppose there exist $x\in L^0(X)$ and $A\in \mathcal{F}$ with $\mu(A)>0$ such that $F(x)=-\infty$ on $A$. 
           Let $x_0\in L^0(X)$ with $F(x_0)\in L^0$. 
           From $L^0$-convexity we have $F(\lambda x_0 + (1-\lambda)x)=-\infty$ on $A$ for all $0\leq \lambda <1$. 
           Since $\lambda x_0 + (1-\lambda) x$ converges to $x_0$ as $\lambda$ tends to $1$, it follows from $\sigma_s$-lower semi-continuity that $F(x_0)=-\infty$ on $A$ which is a contradiction. 
\end{proof}

\section{Fenchel-Moreau type duality for vector-valued functions}\label{s:main}
We consider the setting of the previous section. 
Let $(X,Y,\langle\cdot,\cdot \rangle)$ be a dual pair of Banach spaces such that 
$|\langle x,y\rangle|\leq \|x\| \|y\|$ for all $x\in X$ and $y\in Y$, and both norm-closed unit balls are weakly closed. 
Recall that $\langle\cdot,\cdot\rangle$ extends to $L^0(X)\times L^0(Y)$ with values in $L^0$, and satisfies $|\langle x,y\rangle|\leq \|x\|\|y\|$ for all $x\in L^0(X)$ and $y\in L^0(Y)$. 
The next result shows that $(L^0(X),L^0(Y),\langle\cdot,\cdot \rangle)$ is an $L^0$-dual pair. 
\begin{lemma}
The functions 
\[y\mapsto\langle x,\cdot\rangle : L^0(Y)\to L^0\quad \mbox{and}\quad  x\mapsto\langle \cdot,y\rangle : L^0(X)\to L^0 \] 
are stable and $L^0$-linear for all $x\in L^0(X)$ and $y\in L^0(Y)$. 

Moreover, for every $x\in L^0(X)$ with $\mu(x=0)=0$ there exists $y\in L^0(Y)$ such that $\mu(\langle x,y\rangle=0)=0$, and symmetrically,  for every $y\in L^0(Y)$ with $\mu(y=0)=0$ there exists $x\in L^0(X)$ such that $\mu(\langle x,y\rangle=0)=0$.
\end{lemma}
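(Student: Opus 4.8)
The plan is to prove the two assertions in turn, the first feeding into the second.

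\emph{$L^0$-linearity and stability.} On step functions both are immediate: for $x=\sum_i x_i1_{A_i}$ and $y=\sum_j y_j1_{B_j}$ one has $\langle x,y\rangle=\sum_{i,j}\langle x_i,y_j\rangle 1_{A_i\cap B_j}$ by bilinearity of $\langle\cdot,\cdot\rangle$ on $X\times Y$, which gives $L^0$-bilinearity and compatibility with countable concatenations on $L^0_s(X)\times L^0_s(Y)$. For arbitrary $x\in L^0(X)$, $y,\tilde y\in L^0(Y)$ and $\lambda\in L^0$ I would take a.e.-approximating step functions $x_n,y_n,\tilde y_n,\lambda_n$, note that $\lambda_ny_n+\tilde y_n$ and the relevant countable concatenations are again step functions converging a.e.\ to $\lambda y+\tilde y$, resp.\ to the concatenation, and pass to the limit in the step-function identities, using \eqref{eq:cauchyschwartz} to control the convergence and the fact that the extended pairing equals the a.e.\ limit $\lim_n\langle a_n,b_n\rangle$ along any approximating step functions. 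This part is routine.

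\emph{Separation, the exhaustion.} Fix $x\in L^0(X)$ with $\mu(x=0)=0$ and let $\mathcal A$ be the collection of all $A\in\mathcal F$ admitting some $y\in L^0(Y)$ with $\langle x,y\rangle\ne0$ a.e.\ on $A$. Disjointifying a countable cover and pasting the associated witnesses along a partition — legitimate by the stability of $L^0(Y)$ and the stability of $y\mapsto\langle x,y\rangle$ just proved — shows $\mathcal A$ is closed under countable unions. Choosing a finite measure $\nu\sim\mu$ (possible since $\mu$ is $\sigma$-finite) and a $\nu$-maximizing sequence in $\mathcal A$, we obtain $A^\ast\in\mathcal A$ with $\mu(A\setminus A^\ast)=0$ for all $A\in\mathcal A$; let $y^\ast$ witness $A^\ast$ and put $B:=\Omega\setminus A^\ast$. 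It remains to prove $\mu(B)=0$, for then $\langle x,y^\ast\rangle\ne0$ a.e.\ on $\Omega$. For each fixed $y_0\in Y$ the set $\{\omega:\langle x(\omega),y_0\rangle\ne0\}$ lies in $\mathcal A$, so by maximality $\langle x(\omega),y_0\rangle=0$ for a.e.\ $\omega\in B$.

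\emph{Separation, the countable-family step.} Since $x$ is strongly measurable it is essentially separably valued, so $x(\omega)$ lies a.e.\ in a fixed separable closed subspace $X_0\subseteq X$ (e.g.\ the closed span of the values of step functions approximating $x$). By the norm bound, $W:=\{\langle\cdot,y_0\rangle|_{X_0}:y_0\in Y\}$ is a linear subspace of $X_0^\ast$, and it separates the points of $X_0$ because $(X,Y)$ is a dual pair. The crucial point is that $W$ then contains a \emph{countable} point-separating subfamily: as $X_0$ is separable, the closed unit ball $B_{X_0^\ast}$ is weak$^\ast$-compact and metrizable, hence separable, so $W\cap B_{X_0^\ast}$ admits a weak$^\ast$-dense sequence $w_k=\langle\cdot,y_k\rangle|_{X_0}$; for $0\ne x\in X_0$, a suitable scalar multiple of a separating functional belongs to $W\cap B_{X_0^\ast}$, is a weak$^\ast$-limit of a subsequence of $(w_k)$, and weak$^\ast$-convergence preserves non-vanishing at the fixed point $x$, so some $w_k$ detects $x$. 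Applying the last conclusion of the previous paragraph to each $y_k$ and intersecting countably many null sets gives $\langle x(\omega),y_k\rangle=0$ for all $k$, for a.e.\ $\omega\in B$; since $(w_k)$ separates $X_0$ and $x(\omega)\in X_0$, this forces $x(\omega)=0$ a.e.\ on $B$, i.e.\ $\mu(B)\le\mu(x=0)=0$. Hence $A^\ast=\Omega$ up to a null set and $y^\ast$ is as required; the symmetric statement follows by interchanging $X$ and $Y$. The main obstacle is precisely the extraction of the countable separating subfamily: the naive recipe of one separating functional per point of a countable dense subset of $X_0$ fails because the separation can degenerate, and it is the weak$^\ast$-metrizability of the dual ball of a separable space that rescues the argument.
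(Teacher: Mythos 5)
Your proof is correct, but it takes a genuinely different route from the paper's. The paper argues quantitatively: since $\mu(x=0)=0$ one can choose a piecewise constant radius $r\in L^0_{++}$ so that the ball $C_{3r}(x)$ avoids $0$ on every non-null set, approximate $x$ by a step function $x^s$ within $r$, and then on each piece of the partition apply Hahn--Banach strong separation of the point $0$ from the convex, $\sigma(X,Y)$-closed ball $C^X_{2r_k}(x^s_k)$ --- this is exactly where the standing hypothesis that norm-closed balls are weakly closed enters --- obtaining $y_k\in Y$ and $\delta_k>0$ with $\langle\cdot,y_k\rangle\ge\delta_k$ on that ball; gluing the $y_k$ along the partition yields a single $y\in L^0_s(Y)$ with $\langle x,y\rangle\ge\delta>0$ a.e., which is strictly stronger than mere non-vanishing. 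Your argument instead runs a measure-theoretic exhaustion to reduce the claim to: if $\langle x(\omega),y_0\rangle=0$ a.e.\ for every fixed $y_0\in Y$, then $x=0$ a.e.; you then settle this via the essential separable-valuedness of strongly measurable functions together with the extraction of a countable point-separating subfamily of $Y$-functionals on the separable range space, using weak$^\ast$-metrizability of the dual ball (and you are right that the naive ``one functional per point of a dense set'' shortcut does not work). What each approach buys: the paper's proof produces an explicit uniform lower bound $\delta\in L^0_{++}$ and a step-function separator, at the cost of invoking the weak closedness of norm balls; yours needs only that $(X,Y)$ is a separating dual pair with $|\langle x,y\rangle|\le\|x\|\,\|y\|$, avoids convex separation entirely, but delivers only the qualitative conclusion $\mu(\langle x,y\rangle=0)=0$ --- which is, however, all the lemma asserts. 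Both proofs leave the stability and $L^0$-linearity part to routine step-function approximation, as does the paper.
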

\begin{proof}
We only show the separation argument. To that end, fix $x \in L^0(X)$ with $\mu(x=0)=0$. 
Then there exists $r\in L^0_{++}$ with $r=\sum_k r_k1_{A_k}$ for a sequence $(r_k)$ in $(0,\infty)$ and a partition $(A_k)$ of $\Omega$ such that $01_A\notin C_{3r}(x)1_A$
for all $A\in \mathcal{F}$ with $\mu(A)>0$, where $C_{3r}(x):=\{\tilde{x} \in L^0(X)\colon \|\tilde{x}-x\|\leq 3r\}$. 
Further, there exists $x^s\in L^0_s(X)$ such that $\|x-x^s\|\le r$. 
By the triangle inequality, $01_A \not\in C_{2r}(x^s)1_A$ for all $A\in\mathcal{F}$ with $\mu(A)>0$. 
We can assume that $x^s$ and $r$ are defined on the same partition, i.e.~$x^s=\sum_k x^s_k 1_{A_k}$, by changing if necessary to a common refinement. Then it holds 
\begin{equation*}
C_{2r}(x^s)=\left\{\tilde{x} \in L^0(X)\colon \tilde{x} 1_{A_k} \in C_{2r_k}(x^s_k)1_{A_k} \text{ for all } k\right\}. 
\end{equation*}
Since $C^X_{2r_k}(x_k^s):=\{\tilde x \in X\colon \| \tilde x - x_k^s\| \leq 2r_k\}$ is $\sigma(X,Y)$-closed in $X$, by strong separation there exist $y_k \in Y \setminus\{0\}$ and a constant $\delta_k>0$ such that 
\[
\inf_{\tilde{x}\in C^X_{2r_k}(x_k^s)} \langle \tilde{x}, y_k\rangle \geq \delta_k>0
\] 
for all $k$. 
It follows that 
\begin{equation}\label{eq:separation}
\inf_{\tilde{x} \in C_{r}(x^s)}\langle \tilde x, y\rangle \geq \delta >0, 
\end{equation}
where $y:=\sum_k y_k1_{A_k}$ and $\delta:=\sum_k \delta_k1_{A_k}$. 
Indeed, let $\tilde{x}\in C_{r}(x^s)$ and $(\tilde{x}_n)$ be a stable sequence in $C_{r}(\tilde{x}) \cap L^0_s(X)$ with $\|\tilde{x}_n-\tilde{x}\|\to 0$ a.e.. 
We have $\|\tilde{x}_{l}^n-x^s_{k}\|\leq 2r_k$ whenever $\mu(A_k\cap B_{l}^n)>0$, where $\tilde{x}_n=\sum_l \tilde{x}_{l}^n1_{B_{l}^{n}}$. 
From the stability of the extended duality pairing $\langle \cdot, \cdot \rangle$ we obtain 
\[
 \langle \tilde{x}_n, y\rangle=\sum_{k,l} \langle \tilde{x}_{l}^{n},y_k\rangle 1_{A_k\cap B_{l}^{n}} \geq \delta>0, 
\]
so that
\[
\langle \tilde{x}, y\rangle=\langle \tilde{x}- \tilde{x}_n, y\rangle+ \langle \tilde{x}_n, y\rangle \geq \langle \tilde{x}- \tilde{x}_n, y\rangle+\delta \rightarrow \delta>0, 
\]
which shows \eqref{eq:separation}. 
Since $x\in  C_{r}(x^s)$ we conclude $\mu(\langle x, y\rangle=0)=0$. 
\end{proof}

In view of the previous result conditional functional analysis becomes applicable. 
By an adaptation of the classical results for dual pairs, it follows from the conditional fundamental theorem of 
duality (see e.g.~\cite[Corollarly 4.48]{martindiss} in the setting of conditional set theory; for an adaptation to the present $L^0$-setting, see \cite[Proposition 6.6]{JZ2017compact}) that
every $\sigma_s(L^0(X),L^0(Y))$-continuous, $L^0$-linear function $h:L^0(X)\to L^0$ is of the form
$ \langle\cdot,y\rangle$ for some $y\in L^0(Y)$. 
In particular, the  $L^0$-dual space of $(L^0(X),\sigma_s(L^0(X),L^0(Y)))$ can be identified with $L^0(Y)$. 
As a consequence, an application of a conditional version of the Fenchel-Moreau theorem, see e.g.~\cite[Theorem 6.3.]{JZ2017compact}, yields that every $L^0$-proper convex, stable, $\sigma_s$-lower semi-continuous function $F:L^0(X)\to \bar L^0$
has the dual representation
\begin{equation}\label{condFM}
F(x)=\sup_{y\in L^0(Y)} \{ \langle x, y\rangle - F^\ast(y)\}, \quad x\in L^0(X), 
\end{equation}
for the $L^0$-convex conjugate $F^\ast(y):=\sup_{x\in L^0(X)} \{ \langle x, y\rangle - F(x)\}$ for all $y\in L^0(Y)$. 

Now we are ready to state our main result.

\begin{theorem}\label{t:main}
Let $f\colon X\to \bar L^0$ be a proper convex function. 
Then $f$ is $\sigma_s$-lower semi-continuous if and only if it has the representation 
\begin{equation}\label{dualrep}
 f(x)=\sup_{y \in L^0(Y)} \left\{\langle x, y \rangle - f^\ast(y)\right\} \quad\mbox{for all } x\in X, 
 \end{equation}
 where $f^\ast\colon L^0(Y)\to \bar{L}^0$ is given by 
 \[
 f^\ast(y):=\sup_{x\in X} \left\{\langle x, y \rangle - f(x)\right\}. 
 \]
\end{theorem}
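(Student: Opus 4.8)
The strategy is to derive the vector-valued representation \eqref{dualrep} from the conditional Fenchel-Moreau representation \eqref{condFM} applied to the extension $F$ constructed in Theorem \ref{t:lscextension}, and conversely to obtain $\sigma_s$-lower semi-continuity directly from the structural properties of the right-hand side of \eqref{dualrep}.

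For the direction ``$\sigma_s$-lower semi-continuous $\Rightarrow$ \eqref{dualrep}'', first apply Theorem \ref{t:lscextension} to obtain a stable, $\sigma_s$-lower semi-continuous, $L^0$-proper convex extension $F\colon L^0(X)\to\bar L^0$ with $F|_X=f$. Since $(L^0(X),L^0(Y),\langle\cdot,\cdot\rangle)$ is an $L^0$-dual pair and the stable topology $\sigma_s(L^0(X),L^0(Y))$ is induced by the stable family of $L^0$-seminorms $(|\langle\cdot,y\rangle|)_{y\in L^0(Y)}$ with $L^0$-dual space $L^0(Y)$, the conditional Fenchel-Moreau theorem yields the representation \eqref{condFM} for $F$, with $L^0$-convex conjugate $F^\ast$. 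Restricting \eqref{condFM} to $x\in X\subset L^0(X)$ gives $f(x)=F(x)=\sup_{y\in L^0(Y)}\{\langle x,y\rangle-F^\ast(y)\}$. It then remains to identify $F^\ast$ with $f^\ast$ on $L^0(Y)$. Here I would show $F^\ast(y)=f^\ast(y)$ for every $y\in L^0(Y)$: the inequality $f^\ast\le F^\ast$ is immediate from $X\subset L^0(X)$, while for the reverse one uses that $F$ is the $\sigma_s$-lower semi-continuous hull of $f_s$ (the formula $F(x)=\sup_{V\in\mathcal V(x)}\inf\{f_s(\tilde x):\tilde x\in V\cap L^0_s(X)\}$), so that the supremum defining $F^\ast(y)=\sup_{x\in L^0(X)}\{\langle x,y\rangle-F(x)\}$ can be computed over the dense set $L^0_s(X)$, and on step functions $\langle\cdot,y\rangle$ and $f_s$ are both stable, reducing $\sup_{x\in L^0_s(X)}\{\langle x,y\rangle-f_s(x)\}$ to the stable supremum of $\langle x_k,y_k\rangle-f(x_k)$ over the pieces, which is dominated by $f^\ast(y)=\sup_{x\in X}\{\langle x,y\rangle-f(x)\}$ after an appropriate gluing. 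This gives \eqref{dualrep}.

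For the converse direction, suppose $f$ has the representation \eqref{dualrep}. Each function $x\mapsto\langle x,y\rangle-f^\ast(y)$ on $X$ is affine and, by the Cauchy-Schwarz estimate \eqref{eq:cauchyschwartz}, $\sigma_s$-lower semi-continuous (its extension to step functions is $\sum_k(\langle x_k,y\rangle-f^\ast(y))1_{A_k}$, and along a stable net $(x_\alpha)$ converging to $x$ one has $\langle x_\alpha,y\rangle\to\langle x,y\rangle$ a.e.\ by definition of stable convergence). The pointwise essential supremum of an arbitrary family of $\sigma_s$-lower semi-continuous functions is again $\sigma_s$-lower semi-continuous, since $\liminf$ commutes with $\sup$ in the required direction; equivalently, the sublevel sets of $f_s$ are intersections of the closed sublevel sets of the affine pieces, hence closed, so $\sigma_s$-lower semi-continuity follows from Proposition \ref{p:lsc}(iii). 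One also checks that \eqref{dualrep} forces $f$ to be proper convex, as assumed.

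\textbf{Main obstacle.} The delicate point is the identification $F^\ast=f^\ast$ on all of $L^0(Y)$, i.e.\ showing that passing from $X$ to the much larger $L^0(X)$ (and from $f$ to its $\sigma_s$-lower semi-continuous hull $F$) does not enlarge the conjugate. This requires carefully combining density of $L^0_s(X)$ in $L^0(X)$ for $\sigma_s$, the stability of the pairing and of $f_s$, and a concatenation/gluing argument to pull step-function suprema back to the scalar supremum over $X$; the bookkeeping with partitions and the convention $-\infty+\infty=+\infty$ is where care is needed.
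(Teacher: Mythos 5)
Your proposal is correct in substance and the converse direction coincides with the paper's argument (restrict \eqref{dualrep} to $L^0_s(X)$ by stability, use $\sigma_s$-continuity of $\langle\cdot,y\rangle$, and pass to the $\liminf$ inside the supremum, or equivalently intersect closed sublevel sets and invoke Proposition \ref{p:lsc}(iii)). The forward direction, however, takes a genuinely heavier route than necessary: what you single out as the ``main obstacle'' --- proving the full identification $F^\ast=f^\ast$ on $L^0(Y)$ --- is never needed. The paper only uses the trivial inequality $f^\ast\le F^\ast$ (your ``immediate from $X\subset L^0(X)$'' direction, recorded as \eqref{eq19}) together with the Fenchel--Young inequality $\langle x,y\rangle-f^\ast(y)\le f(x)$ for $x\in X$, which sandwich as
$f(x)=F(x)=\sup_{y}\{\langle x,y\rangle-F^\ast(y)\}\le\sup_{y}\{\langle x,y\rangle-f^\ast(y)\}\le f(x)$,
closing the argument in two lines. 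Your stronger claim $F^\ast=f^\ast$ is in fact true (it is the statement that conjugation is blind to passing to the $\sigma_s$-lower semi-continuous hull and to the step-function completion), and the sketched proof via density of $L^0_s(X)$, stability of $f_s$ and $\langle\cdot,y\rangle$, and gluing over partitions can be made rigorous --- the key missing detail is that for $x\in L^0(X)$ one must test with the specific neighborhoods $V^r_{\{y\}}(x)$ with $r\downarrow 0$ to control $\langle x-\tilde x,y\rangle$, and one must handle the $-\infty+\infty$ convention when $f_s\equiv+\infty$ on a piece. So your route buys a sharper statement (the conjugate of the maximal extension is computable on $X$ alone, which is implicitly what makes Remark \ref{r:maximality} work) at the cost of extra bookkeeping, whereas the paper's sandwich gets the theorem with no additional work. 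Finally, the remark that \eqref{dualrep} ``forces $f$ to be proper convex'' is superfluous, since properness and convexity are hypotheses of the theorem.
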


\begin{proof}
    Suppose that $f$ is $\sigma_s$-lower semi-continuous. 
    It follows from Theorem \ref{t:lscextension} that there exists an $L^0$-proper convex $\sigma_s$-lower semi-continuous extension $F\colon L^0(X)\to \bar{L}^0$ . 
    By \eqref{condFM}, one obtains 
    \begin{equation*}
        F(x)=\sup_{y \in L^0(Y)}\{\langle x,y\rangle - F^\ast(y)\}, \quad x\in L^0(X), 
    \end{equation*}
    for the $L^0$-convex conjugate 
    \[
    F^\ast(y)=\sup_{x\in L^0(X)} \{\langle x,y\rangle - F(x)\}, \quad y\in L^0(Y). 
    \]
    Since 
    \begin{equation}\label{eq19}
        f^\ast(y)=\sup_{x\in X} \{\langle x,y\rangle - f(x)\}\leq \sup_{x\in L^0(X)} \{\langle x,y\rangle - F(x)\}=F^\ast(y)
    \end{equation}
    and $\langle x,y\rangle - f^\ast(y)\leq f(x)$ for all $y\in L^0(Y)$, one has
    \begin{equation*}
        f(x)=F(x)=\sup_{y\in L^0(Y)}\{\langle x,y\rangle - F^\ast(y)\} \leq \sup_{y\in L^0(Y)}\{\langle x,y\rangle - f^\ast(y)\} \leq f(x) 
    \end{equation*}
    for all $x\in X$. 
    
    Conversely, suppose that $f$ satisfies the representation \eqref{dualrep}.
    Let $(x_\alpha)$ be a stable net in $L^0_s(X)$ which converges to $x$. Since $\tilde x\mapsto \langle\tilde x,y\rangle$ is $\sigma_s$-lower semi-continuous on $L^0_s(X)$ for all $y\in L^0(Y)$, it follows that
    \begin{align*}
f(x)&=\sup_{y\in L^0(Y)}\left\{\langle x,y\rangle-f^\ast(y)\right\}\\
&\le \liminf_\alpha \sup_{y\in L^0(Y)}\left\{\langle x_\alpha,y\rangle-f^\ast(y)\right\}\\
&= \liminf_\alpha f_s(x_\alpha), 
    \end{align*}
where in the last equality we used that \eqref{dualrep} also holds for $f_s$ on $L^0_s(X)$ by stability.
This shows that $f$ is $\sigma_s$-lower semi-continuous.
\end{proof}

\begin{remark}\label{r:maximality}
Let $f:X\to\bar L^0$ be a proper convex, $\sigma_s$-lower semi-continuous function. Then
    the extension $F$ in Theorem \ref{t:lscextension} is \emph{maximal} in the sense that $G\leq F$ for every $L^0$-proper convex, $\sigma_s$-lower semi-continuous extension $G$ of $f$. 
    In fact, we have $F^\ast \leq G^\ast$ by the same argumentation as in \eqref{eq19}, and therefore 
    \begin{align}
        F(x)=\sup_{y\in L^0(Y)}\{\langle x,y\rangle - F^\ast(y)\}\geq \sup_{y\in L^0(Y)}\{\langle x,y\rangle - G^\ast(y)\} =G(x), 
    \end{align}
    where the last equality follows from \eqref{condFM}. 
\end{remark}



\end{document}